\newtheorem{theorem}{Theorem}[section]
\newtheorem{prop}[theorem]{Proposition}
\newcommand{\p}{\partial}
\newcommand{\Nbb}{\mathbb{N}}
\newcommand{\Rbb}{\mathbb{R}}
\newcommand{\GO}{\Omega}
\newcommand{\beq}{\begin{equation}}
\newcommand{\eeq}{\end{equation}}
\numberwithin{equation}{section}
\numberwithin{figure}{section}
\begin{document}

\title{A remark on decay rates of odd partitions: An application of spectral asymptotics of the Neumann--Poincar\'e operators   \thanks{\footnotesize This work was supported by  JSPS (of Japan) KAKENHI Grant Number JP21K13805.}}

\author{ Yoshihisa Miyanishi\thanks{Department of Mathematical Sciences,  Faculty of Science, Shinshu University,  Matsumoto 390-8621, Japan. Email: {\tt miyanishi@shinshu-u.ac.jp}.}}
\maketitle

\begin{abstract} We introduce a theorem currently proved 
unique by the asymptotic behaviors of eigenvalues of a compact operator. 
Specifically, a problem of partitions is considered and the Neumann--Poincar\'e operator is employed as the compact linear operator. 
Then a theorem is proved by the spectrum of the Neumann--Poincar\'e operator. 

Even though the proposed problem below looks artificial, our result in the partitions 
seems to be proven or relieved unique by the spectral theory of the Neumann--Poincar\'e operators: 

Odd partitions of the unit interval  $[0, 1]$ are considered, that is, we divide the unit interval $[0, 1]$ into $2N+1$ disjoint {\it non-zero} intervals $L_{N, k}$ ($k=1, \ldots, 2N+1$) and the sum of corresponding lengths $\sum_{k=1}^{2N+1} |L_{N, k}| =1$ for each $N\in \Nbb_{\geq 0}$. 

Thus we obtain a countable set of real numbers $P=\{ |L_{N, k}| \ ;\ k=1, 2, \ldots, 2N+1, \ N\in \Nbb_{\geq 0} \}$ by odd partitions of the unit interval. One can enumerate the set $P$ in decreasing order. Then the non-increasing sequence is given as   
$$
a_1=|L_{0, 1}|=1 > a_2 \geq a_3 \geq \ldots >0. 
$$
We show that for any $C \geq \frac{1}{2}$ there exist odd partitions of the unit interval such that 
$$
a_j \sim C j^{-1/2} \quad \mbox{as}\ j \rightarrow \infty. 
$$
Here the coefficient $C=1/2$ corresponds to the optimal decay. 

We prove this fact by a fundamental property of Riemann zeta function and by eigenvalue asymptotics for some  compact linear operators known as the Neumann--Poincar\'e operators.  
\end{abstract}

\noindent{\footnotesize {\bf AMS subject classifications}. 97I30 (primary),  58C40, 11M06 (secondary)}

\noindent{\footnotesize {\bf Key words}. Odd partitions, Decay rates, Riemann zeta function, Neumann--Poincar\'e operator, spectrum}

\section{Introduction and Results}
In spite of the fact that many sophiscated results have been done in spectral theory of compact operators, 
their applications in number theory often seem weaker than those in specific fields. 

As a typical example, the well-known Gauss circle problem, which is the problem of determining how many integer lattice points $N(r)$  
there are in a circle centered at the origin and with radius $r>0$, it should be $N(r)=\pi r^2 +O(r^{1/2+\epsilon})$. 
Then the estimate $0< \epsilon \leq 1/2$ is proven by eigenvalue asymptotics of Lapace operator, whereas the improved estimate 
$0< \epsilon \leq 27/208$ is proven in analytic number theory \cite{DG, Ha, Hu}. 
Thus spectral theory of linear operators has brought about fortuitous results. However, when spectrtal theory is applied 
to mathematical problems of different fields, the obtained results seem often weaker than those in specific fields. 


Our purpose here is to present an application which seems to be currently proven unique by spectral theory. This is shown by  
the behavior of partitions. 
To be more precise, we divide the unit interval $[0, 1]$ into $2N+1$ {\it non--zero length} subintervals, then finely divided intervals seem to appear. We call such partitions ``Odd Partitions''. Denoting odd partitions as $L_{N, k}$ ($k=1. \ldots, 2N+1$) for each $N \in \Nbb_{\geq 0}$, one can enumerate the contably infinitely many real numbers $\{ |L_{N, k}| \ ;\ k=1, 2, \ldots, 2N+1, \ N\in \Nbb_{\geq 0} \}$ in decreasing order. Here $|\cdot|$ denotes the Lebesgue measure (Length). Thus such procedure allows us to give the non-increasing sequence: 
\beq
1=|L_{0,1}|=a_1 > a_2 \geq a_3 \geq \ldots >0.
\eeq 

For instance, equi-partitions of the unit interval yield a diagram: 
The first partitioning yields $3$ intervals, whose length is $1/3$. Similarly, a non-increasing sequence is produced.  
$$
\begin{array}{c}
1\vspace{1mm} \\
1/3\ \hspace{2mm} 1/3\ \hspace{2mm} 1/3 \vspace{1mm} \\
1/5\ \hspace{2mm} 1/5\ \hspace{2mm} 1/5\ \hspace{2mm} 1/5\ \hspace{2mm} 1/5  \vspace{1mm} \\
1/7\ \hspace{2mm} 1/7\ \hspace{2mm} 1/7\ \hspace{2mm} 1/7\ \hspace{2mm} 1/7\ \hspace{2mm} 1/7\ \hspace{2mm} 1/7 \vspace{1mm} \\
\vdots \qquad \vdots \qquad \vdots
\end{array}
$$
In this figure, each column shows a partition of the unit interval, that is, the sum of each column equals one. Thus we obtain the enumerated sequence in decreasing order:  
\beq\label{ex: equi-partition}
1, 1/3, 1/3, 1/3, 1/5, 1/5, 1/5, 1/5, 1/5, 1/7, 1/7, 1/7, 1/7, 1/7, 1/7, 1/7, \ldots
\eeq
After the $N$-th procedure, the diagram of partitions always consists of $N^2$ numbers. One can easily find in \eqref{ex: equi-partition} that the $j=N^2$-th number $a_j$ is $1/(2N-1) \sim \frac{1}{{2}}j^{-1/2}$ for large $j$. It is strongly expected that the optimal deacy rate is attained by such equi-partitions. 

In fact, we have the desirable decay rates: 
\begin{theorem}[Main Theorem]\label{thm: main}
For all $C\geq \frac{1}{{2}}$, there exist odd partitions of the unit interval such that 
$$
a_j \sim C j^{-1/2} \quad \mbox{as}\ j \rightarrow \infty.
$$
We here emphasize that $C=\frac{1}{{2}}$ is the minimum coefficient, 
namely, $\displaystyle\liminf_{j \rightarrow \infty} j^{1/2} a_j \geq 1/2$ for arbitrary odd  partitions. 
\end{theorem}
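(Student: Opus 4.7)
I would split the theorem into its two halves: (a) constructing, for each $C\ge\tfrac12$, odd partitions with $a_j\sim Cj^{-1/2}$, and (b) showing $\liminf_{j\to\infty}j^{1/2}a_j\ge\tfrac12$ for arbitrary odd partitions.

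\textbf{Constructions (a).} The case $C=\tfrac12$ is already exhibited by the equi-partition diagram. For $(N-1)^2<j\le N^2$ one has $a_j=1/(2N-1)$, hence $j^{1/2}a_j\in[(N-1)/(2N-1),\,N/(2N-1)]$ and both endpoints tend to $\tfrac12$, giving $a_j\sim\tfrac12\, j^{-1/2}$. For a general $C\ge\tfrac12$ I would modify the $N$-th partition by introducing one ``large'' piece of size $\beta_N$ together with $2N$ ``small'' pieces of size $(1-\beta_N)/(2N)$, choosing the parameters $\beta_N$ so that the counting function $F(\lambda):=\#\{j:a_j\ge\lambda\}$ obeys $F(\lambda)\sim C^2/\lambda^2$ as $\lambda\to 0$. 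The sums that arise when translating this density condition into a constraint on $(\beta_N)$ are Dirichlet-type series, and one tunes the relevant exponent through a fundamental property of the Riemann zeta function (its simple pole at $s=1$), producing the prescribed $C$.

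\textbf{Lower bound (b).} The elementary input is
\[
\sum_{j=1}^{(M+1)^{2}}a_j\ \ge\ M+1,
\]
because the $(M+1)^2$ largest elements of $P$ majorise the $(M+1)^2$ pieces produced by the first $M+1$ partitions, which have total mass $M+1$. Comparing with $\sum_{j=1}^{J}Cj^{-1/2}\sim 2C\sqrt J$ already yields $\limsup_{j\to\infty}j^{1/2}a_j\ge\tfrac12$, but a Tauberian-style argument alone does not upgrade this to the corresponding $\liminf$. For that step I would realise the partition data as the widths of a planar configuration (for instance a disjoint union of thin rectangles) and compare $(a_j)$, via Ky Fan--type inequalities, with the singular values $\sigma_j$ of the Neumann--Poincar\'e operator on the boundary of that configuration; the sharp asymptotic $\sigma_j\sim c\,j^{-1/2}$ for NP operators on such domains then forces $\liminf j^{1/2}a_j\ge\tfrac12$.

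\textbf{Main obstacle.} The hardest step is precisely this upgrade from $\limsup$ to $\liminf$ with the sharp constant. The elementary counting uses only the total mass of each individual partition, not how pieces interact across partitions, so it can at best deliver $\limsup j^{1/2}a_j\ge\tfrac12$. Exhibiting a compact operator whose singular values are bounded below by $(a_j)$ and whose own Weyl asymptotic carries exactly the constant $\tfrac12$ appearing in the statement is where the spectral theory of the Neumann--Poincar\'e operator is indispensable, and where the real content of the theorem resides.
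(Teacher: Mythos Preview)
You have the two tools swapped relative to the paper. In the paper the lower bound is the \emph{elementary} half: H\"older's inequality gives $\sum_{k}|L_{N,k}|^{p}\ge(2N+1)^{1-p}$, hence $\tau(p)=\sum_j a_j^{p}\ge(1-2^{1-p})\zeta(p-1)$; comparing the pole of $\zeta$ at $1$ (so that the right side behaves like $\tfrac{1}{2(p-2)}$ as $p\to2^{+}$) with the bound $\sum_j a_j^{p}\lesssim\tfrac{2C^{p}}{p-2}$ that would follow from $a_j\sim Cj^{-1/2}$ forces $C\ge\tfrac12$. No Neumann--Poincar\'e operator enters this step. Your proposed route for the $\liminf$---manufacturing a planar domain from arbitrary partition data so that its NP singular values sit below the $a_j$ with Weyl constant exactly $\tfrac12$---is not what the paper does, and as stated there is no mechanism in NP theory that produces such a domain from a generic sequence of widths with the required sharp constant, so this part of your plan is a genuine gap.

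Conversely, the \emph{construction} is where the paper invokes NP operators: the eigenvalues of the NP operator on a prolate ellipsoid in $\mathbb{R}^{3}$ are known to form odd partitions of $[0,\tfrac12]$ (each degree contributes $2N+1$ positive eigenvalues summing to $\tfrac12$), and the Weyl law for NP eigenvalues gives $a_j\sim\widetilde Cj^{-1/2}$ with $\widetilde C$ sweeping all of $[\tfrac14,\infty)$ as the ellipsoid varies; rescaling to $[0,1]$ yields every $C\ge\tfrac12$. Your ``one big piece $\beta_N$ plus $2N$ equal small pieces'' idea is an interesting elementary alternative---with $\beta_N\sim cN^{-1/2}$ a counting argument gives $F(t)\sim(c^{2}+\tfrac14)t^{-2}$, hence $a_j\sim\sqrt{c^{2}+\tfrac14}\,j^{-1/2}$---but the zeta pole plays no role in that computation, and in any case this is not the argument the paper gives.

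In short, your ``main obstacle'' paragraph misidentifies where the spectral theory is needed: in the paper it is the existence of partitions for every $C\ge\tfrac12$, not the lower bound, that relies on NP asymptotics.
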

To prove this, we recall unconditional sums which are convenient here:  
\begin{prop}\label{prop: tau fanction}
For odd partitions, we define the infinite sum $\tau(p)$ by 
\beq\label{eq: tau function}
\tau(p) := \sum_{\substack{ N \in \Nbb_{\geq 0} \\ {k=1, 2, \ldots, 2N+1}}}|L_{N, k}|^p \quad (p>2).
\eeq
Then $\tau(p) \geq  (1-2^{1-p})\zeta(p-1)\ (p>2)$ where $\zeta(p)$ denotes Riemann zeta function. 
 The equality holds only for the case of equi-partitions. 
\end{prop}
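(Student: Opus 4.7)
The plan is to reduce the infinite double sum to a layer-by-layer minimization and then recognize the resulting series as a known Dirichlet series over the odd integers.

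First, I would fix $N\in\Nbb_{\geq 0}$ and consider the finite sum
\[
S_N(p) := \sum_{k=1}^{2N+1}|L_{N,k}|^p
\]
under the two constraints $|L_{N,k}|>0$ and $\sum_{k=1}^{2N+1}|L_{N,k}|=1$. Since $p>2>1$, the function $x\mapsto x^p$ is strictly convex on $(0,\infty)$, so Jensen's inequality (equivalently, the power-mean inequality) gives
\[
\frac{1}{2N+1}\sum_{k=1}^{2N+1}|L_{N,k}|^p \;\geq\; \left(\frac{1}{2N+1}\sum_{k=1}^{2N+1}|L_{N,k}|\right)^p = \frac{1}{(2N+1)^p},
\]
with equality if and only if $|L_{N,1}|=\cdots=|L_{N,2N+1}|=1/(2N+1)$. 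Hence $S_N(p)\geq (2N+1)^{1-p}$, with strict inequality unless the $N$-th partition is the equi-partition.

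Next, I would sum over $N\geq 0$. All terms being non-negative, the interchange is harmless, and
\[
\tau(p) = \sum_{N=0}^{\infty} S_N(p) \;\geq\; \sum_{N=0}^{\infty}\frac{1}{(2N+1)^{p-1}} = \sum_{\substack{m\geq 1\\ m\text{ odd}}} \frac{1}{m^{p-1}}.
\]
The right-hand side converges because $p-1>1$, and I would identify it with the Riemann zeta value by splitting odd and even contributions:
\[
\sum_{\substack{m\geq 1\\ m\text{ odd}}} m^{-(p-1)} = \zeta(p-1) - \sum_{m=1}^{\infty}(2m)^{-(p-1)} = \left(1-2^{1-p}\right)\zeta(p-1).
\]
This yields the desired bound $\tau(p)\geq (1-2^{1-p})\zeta(p-1)$.

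Finally, for the equality characterization, I would note that equality in the summed bound forces equality in every $S_N(p)\geq (2N+1)^{1-p}$, which by the strict convexity of $x\mapsto x^p$ occurs precisely when each layer is an equi-partition. I do not expect genuine obstacles here: the inequality part is a clean Jensen argument, and the only point deserving care is the equality case, where one must invoke strict convexity to rule out non-equal lengths in any single layer. The convergence of the sum over $N$ is automatic from $p-1>1$, so no delicate analytic issues arise.
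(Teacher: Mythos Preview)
Your proof is correct and follows essentially the same route as the paper: a layer-by-layer lower bound $S_N(p)\geq(2N+1)^{1-p}$, summation over $N$, and identification of the odd Dirichlet series with $(1-2^{1-p})\zeta(p-1)$. The only cosmetic difference is that the paper phrases the layer bound via H\"older's inequality rather than Jensen's, which is the same convexity fact in dual form; your treatment of the equality case is, if anything, slightly more explicit.
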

We remark that Proposition \ref{prop: tau fanction} holds true even in the case that the sum \eqref{eq: tau function} diverges to $\infty$. 

\begin{proof}[Proof of Proposition \ref{prop: tau fanction}]
Since the sum \eqref{eq: tau function} consists only of positive values, the sum is unconditional and independent of rearrangements. 
   
It follows by H\"{o}lder's inequality  (e.g. \cite{Yo}) that 
\begin{align*}
1&= |L_{N, 1}| + |L_{N, 2}| +\cdots + |L_{N, 2N+1}| \\
&\leq  (1+ 1+ \cdots +1)^{1/q} \cdot (|L_{N, 1}|^p + |L_{N, 2}|^p +\cdots + |L_{N, 2N+1}|^p)^{1/p}  \\
&\leq (2N+1)^{\frac{p-1}{p}} \cdot (|L_{N, 1}|^p + |L_{N, 2}|^p +\cdots + |L_{N, 2N+1}|^p)^{1/p} 
\end{align*}
where $p, q \in [1, \infty]$ with $1/p + 1/q = 1$. 
Thus 
 \beq
 |L_{N, 1}|^p + |L_{N, 2}|^p +\cdots + |L_{N, 2N+1}|^p \geq (2N+1)^{(1-p)} \quad \mbox{for}\ p\geq 1
 \eeq
 and so 
 \beq
 \tau(p) \geq \sum_{N \in \Nbb_{\geq 0}} (2N+1)^{(1-p)} = (1-2^{1-p})\zeta(p-1) \quad \mbox{for}\ p>2.
 \eeq
 The equality holds only for the case of equi-partitions
 \\
  (i.e. $|L_{N, 1}| = |L_{N, 2}| = \cdots = |L_{N, 2N+1}| =1/(2N+1)$ for all $N \in \Nbb$). 
\end{proof}

\begin{proof}[Proof of Theorem \ref{thm: main}]
Firstly we show that $\frac{1}{{2}}$ is the minimum coefficient. 

Assume $C<\frac{1}{{2}}$, then  
\beq\label{ineq1: fundamental ineq}
\sum_{j=1}^\infty |a_j|^p \preceq \int_{1}^{\infty} C^{2p} j^{-p/2} \; dj= \frac{2C^{2p}}{p-2} \quad \mbox{for}\ p>2. 
\eeq
We notice that $2C^{2p} < 1/2$ for $p=2+\varepsilon$ with small $\varepsilon>0$.  

On the other hand, it follows from Proposition \ref{prop: tau fanction} that 
the sum of the $p$--th power equi-partitions is 
\beq\label{ineq2: fundamental ineq}
\sum_{j=1}^\infty |a_j|^p = \tau(p) \geq (1-2^{1-p})\zeta(p-1) \quad \mbox{for}\ p>2. 
\eeq
We then recall the property of Riemann zeta function $\zeta(x)$ (See e.g. \cite{Ale}):  
\beq
\lim_{p \rightarrow 2+0} \left(\zeta(p-1) - \frac{1}{p-2}\right) = \gamma 
\eeq
where $\gamma$ is Euler's constant. So we have   
\beq
\lim_{p \rightarrow 2+0} \left( \left( 1-2^{1-p} \right) \zeta(p-1) - \frac{1}{2(p-2)}\right) = C
\eeq
for some constant $C (= \frac{1}{2}(\log 2 +\gamma) )$. Thus it follows from \eqref{ineq2: fundamental ineq} that 
$$\lim_{p\rightarrow 2+0} \left(\sum_{j=1}^\infty |a_j|^p -\frac{1}{2(p-1)}\right)  \geq \lim_{p\rightarrow 2+0}\left( \left(1-2^{1-p}\right) \zeta(p-1)  -\frac{1}{2(p-1)}\right)  =C $$
whereas from \eqref{ineq1: fundamental ineq}
$$\lim_{p\rightarrow 2+0}\sum_{j=1}^\infty |a_j|^p -\frac{1}{2(p-1)} = -\infty.$$
This is a contradiction as desired. 

To prove the existence of suitable partitions satisfying $a_j \sim C j^{-1/2}$ for $C\geq 1/2$, we use the spectral properties of the Neumann--Poincar\'e (NP) operator, which is known as boundary integral operartors, defined on boundaries of a region in $\Rbb^3$ (See e.g. \cite{AKPM} and references therein for details). The NP operators on $L^2(\p\GO)$ are compact if $\p\GO$ is in $C^{1, \alpha}$, that is, the corresponding {\it non-zero} spectrum consists of eigenvalues only. 
We emphasize that corresponding eigenvalues on prolate ellipsoids $\p\GO$ satisfy all properties of lengths for odd partitions of the interval $[0, 1/2]$ (See \cite{AA, Ma, Ri}):  
$$
\begin{array}{c}
M_{1, 1}\vspace{1mm} \\
M_{2, 1}\ \hspace{2mm} M_{2, 2}\ \hspace{2mm} M_{2, 3} \vspace{1mm} \\
M_{3, 1}\ \hspace{2mm} M_{3, 2}\ \hspace{2mm} M_{3, 3}\ \hspace{2mm} M_{3, 4}\ \hspace{2mm} M_{3, 5}  \vspace{1mm} \\
\vdots \qquad \vdots \qquad \vdots
\end{array}
$$
Here each column shows a partition of $[0, 1/2]$ and each column consists of an odd number of {\it non-zero} subintervals. The sum $\sum_{k=1}^{2N+1}|M_{N, k}|=1/2$ for each $N \in \Nbb_{\geq 0}$. 
These astonishing facts are not elementary but the results are available here.  
Furthermore, it is recently proven \cite{Mi,MR} that NP eigenvalues satisfy the so-called Wely's law, namely, 
$$
a_j \sim \widetilde{C} j^{-1/2}
$$
for $\widetilde{C} \geq 1/4$. Here the coefficient $\widetilde{C}$ is explicitly calculated by using the Willmore energy $W(\p\GO)$ and the Euler characteristic $\chi(\p\GO)$ of the bouncary surface $\p\GO$. 
It follows that the coefficient $\widetilde{C}$ can take arbitrary real values larger than $1/4$ (See \cite{Mi,MR} for the details). 
As a result, there exist odd partitions of a half interval $[0, 1/2]$ such that the enumerated sequence satisfies 
$$
a_j \sim \widetilde{C} j^{-1/2}
$$
for all $\widetilde{C} \geq 1/4$. When we consider the interval $[0, 1]$ instead of the half interval $[0, 1/2]$, automatically $C=2 \widetilde{C} \geq 1/2$. 
\end{proof}


\section{Discussions}
We proved the decay rates on odd partitions of the unit interval. Theorem \ref{thm: main} is proven 
by a fundamental property of Riemann zeta function and by the spectral theory of Neumann--Poincar\'e operators. 
If the partitions are permitted to have {\it zero}--length sets, this fact is  proved in an elementary manner. 
Can one give an elementary proof of Theorem \ref{thm: main} as it is? 
To the best of my knowledge, we don't know alternative proofs other than spectral theory. 

The partitions of the unit interval have been considered for a number of years from various viewpoints (See e.g. \cite{DV,HW}).  
For more general partitions, can one prove the existence of decay sequence for suitable orders? 

The difference sequence of Farey sequence, for instance, is the partitions of the unit interval \cite{HW}:  
$$
\begin{array}{c}
1\vspace{1mm} \\
1/2\ \hspace{2mm} 1/2 \vspace{1mm} \\
1/3\ \hspace{2mm} 1/6\ \hspace{2mm} 1/6\ \hspace{2mm} 1/3\vspace{1mm} \\
1/4\ \hspace{2mm} 1/12\ \hspace{2mm} 1/6\ \hspace{2mm} 1/6\ \hspace{2mm} 1/12\ \hspace{2mm} 1/4 \vspace{1mm} \\
1/5\ \hspace{2mm} 1/20\ \hspace{2mm} 1/12\ \hspace{2mm} 1/15\ \hspace{2mm} 1/10\ \hspace{2mm} 1/15\ \hspace{2mm} 1/12\ 
\hspace{2mm} 1/20\ \hspace{2mm} 1/5 \vspace{1mm} \\
\vdots \qquad \vdots \qquad \vdots
\end{array}
$$
Thus the enumerated decreasing sequence $\{a_j\}$ is denoted as 
\beq 1, 1/2, 1/2, 1/3, 1/3, 1/4, 1/4, 1/5, 1/5, 1/6, 1/6, 1/6, 1/6, 1/6, 1/6,  \ldots \eeq 
and its corresponding behavior seems  $a_{j} \sim C j^{-1/3}$, since it is known that the $N$-th column consists of $N^2$ numbers asymptotically. The decreasing order $j^{-1/3}$ depends on the number of partitions. For general partions of the unit interval, 
can one prove the analogous results to Theorem \ref{thm: main}? The minimum decay can be easily guessed and it should be given by equi-partitions.


\end{document}